\documentclass[12pt]{amsart}

\usepackage{amsmath, amsfonts, amssymb, latexsym, amsthm}
\usepackage{graphicx}
\usepackage{palatino}
\usepackage[top=2.5cm, bottom=2.0cm, left=2.0cm, right=2.0cm,ignoreall]{geometry}
\usepackage[T1]{fontenc}
\usepackage{setspace}
\usepackage{url}
\usepackage{comment}

\usepackage{wrapfig}

\usepackage{tikz}

\usetikzlibrary{calc}
\usetikzlibrary{positioning}
\usetikzlibrary{decorations.markings}
\usetikzlibrary{decorations.pathreplacing}
\usetikzlibrary{patterns}

\theoremstyle{definition}
\newtheorem{theorem}{Theorem}

\newtheorem{proposition}[theorem]{Proposition}

\newtheorem{example}[theorem]{Example}
\newtheorem{question}{Question}

\allowdisplaybreaks

\title{Adversarial graph burning densities}

\address[Karen Gunderson and Hritik Punj]{Department of Mathematics, University of Manitoba, Winnipeg MB, Canada, R3T 2N2}
\address[JD Nir]{Department of Mathematics and Statistics, Oakland University, Rochester MI, US, 48326}
\address[William Kellough]{Department of Mathematics and Statistics, Memorial University, St John's NL, A1C 5S7}
\author{Karen Gunderson}
\email[Karen Gunderson]{karen.gunderson@umanitoba.ca}
\author{William Kellough}
\email[William Kellough]{wskellough@mun.ca}
\author{JD Nir}
\email[JD Nir]{jdnir@oakland.edu}
\author{Hritik Punj}
\email[Hritik Punj]{punjh@myumanitoba.ca}

\thanks{The first author gratefully acknowledges funding from NSERC, the second author was supported in part from the University of Manitoba Faculty of Science Undergraduate Research Award program, and the fourth author was supported by the University of Manitoba Undergraduate Research Award program.}

\subjclass[2020]{Primary: 05C57; Secondary: 05C42, 05C63}

\date{\today}

\begin{document}

\begin{abstract}
Graph burning is a discrete-time process that models the spread of influence in a network. Vertices are either \emph{burning} or \emph{unburned}, and in each round, a burning vertex causes all of its neighbours to become burning before a new \emph{fire source} is chosen to become burning. We introduce a variation of this process that incorporates an adversarial game played on a nested, growing sequence of graphs. Two players, Arsonist and Builder, play in turns: Builder adds a certain number of new unburned vertices and edges incident to these to create a larger graph, then every vertex neighbouring a burning vertex becomes burning, and finally Arsonist `burns' a new fire source. This process repeats forever. Arsonist is said to win if the limiting fraction of burning vertices tends to 1, while Builder is said to win if this fraction is bounded away from 1.

The central question of this paper is determining if, given that Builder adds $f(n)$ vertices at turn $n$, either Arsonist or Builder has a winning strategy.  In the case that $f(n)$ is asymptotically polynomial, we give threshold results for which player has a winning strategy.
\end{abstract}

\maketitle

\section{Introduction}

Graph burning is a discrete-time process that models the spread of influence in a network.  Vertices are in one of two states: either \emph{burning} or \emph{unburned}.  In each round, a burning vertex causes all of its neighbours to become burning and a new \emph{fire source} is chosen: a previously unburned vertex whose state is changed to burning.  The updates repeat until all vertices are burning. The \emph{burning number} of a graph $G$, denoted $b(G)$, is the minimum number of rounds required to burn all of the vertices of $G$.  

Graph burning first appeared in print in a paper of Alon~\cite{nA92}, motivated by a question of Brandenburg and Scott at Intel, and was formulated as a transmission problem involving a set of processors. It was then independently studied by Bonato, Janssen, and Roshanbin~\cite{BJR14, BJR16, eR16} who gave bounds and characterized the burning number for various graph classes.  They showed in \cite{BJR16} that the burning number of every connected graph is equal to the burning number of one of its spanning trees and conjectured that if $G$ is a connected graph on $n$ vertices, then  $b(G) \leq \lceil n^{1/2} \rceil$.  If true, this would be the best possible upper bound as the path with $n$ vertices, $P_n$, satisfies $b(P_n) = \lceil n^{1/2} \rceil$.  Currently, the tightest known bound that holds for all $n$ was given by Bastide, Bonamy, Bonato, Charbit, Kamali, Pierron, and Rabie~\cite{BBBCKPR22} who showed that $b(G) \leq \lceil \sqrt{4n/3} \rceil +1$.  Norin and Turcotte recently showed~\cite{NT22} that $b(G) \leq (1+o(1))\sqrt{n}$ and so the conjecture holds asymptotically.  Here, we require only the following bound from~\cite{BBJRR18}:
\begin{equation}\label{best known upper bound of b(G)}
b(G) \leq  \sqrt{2n}.
\end{equation}

Burning density, introduced by Bonato, Gunderson, and Shaw~\cite{BGS20}, considers the burning process when the graph itself is growing over time.  Consider a sequence of graphs:
\[
G_1 \subseteq G_2 \subseteq G_3 \subseteq \cdots
\]
such that if $i < j$, the subgraph of $G_j$ induced by the vertices $V(G_i)$ is $G_i$.  That is, the `old graph' does not change, but at each step `new vertices' arrive with some edges either between new vertices or between new vertices and old vertices.  The burning process in this setting evolves as: the graph grows, the fire spreads, and we select an additional fire source.

In this context, there may never be a step when every vertex is burning, but one can consider the proportion of burning vertices.  Given the sequences of graphs $\mathbf{G} = (G_i)_{i \geq 1}$ and fire sources $\mathbf{v} = (v_i)_{i \geq 1}$, let $B_n$ be the set of burning vertices at the end of round $n$ and $V_n = V(G_n)$ be the set of all vertices at the end of round $n$. Then we define the \emph{lower burning density}
$\underline{\delta}(\mathbf{G}, \mathbf{v}) = \liminf_{n \to \infty} \frac{|B_n|}{|V_n|}$, the \emph{upper burning density} $\overline{\delta}(\mathbf{G}, \mathbf{v}) = \limsup_{n \to \infty} \frac{|B_n|}{|V_n|}$, and the \emph{burning density} $\delta(\mathbf{G}, \mathbf{v}) = \lim_{n \to \infty} \frac{|B_n|}{|V_n|}$, if it exists.  Note that it is possible to construct examples where the fraction of burning vertices fluctuates infinitely often and the burning density does not exist.

In traditional graph burning there is a choice of where to place the fires, but the process is otherwise deterministic.  In this paper, we modify the setup for burning densities to define an adversarial `game'.  In this game there are two players, Builder and Arsonist, who play as follows: given a function $f: \mathbb{Z}^+ \to \mathbb{Z}_{\ge 0}$, known to both players, and beginning with an empty graph $G_0$, at time step $n \ge 1$, 
\begin{itemize}
    \item Builder adds $f(n)$ vertices to $G_{n-1}$ and edges incident to these new vertices to create a connected graph $G_n$,
	\item the fire spreads, and
	\item Arsonist selects an additional fire source $v_n$.
\end{itemize}
As before, we use $V_n$ to refer to the set of vertices of $G_n$ and $B_n$ to refer to the set of vertices of $G_n$ that are burning at the conclusion of step $n$ of the process.  Note that the definition of burning density does not require $G_n$ to be connected, but we impose this restriction in order to prevent Builder from playing the trivial strategy of adding $f(n)$ independent vertices. In this game (that goes on forever), Arsonist wins if the burning density is $\delta(\mathbf{G}, \mathbf{v}) =1$ and Builder wins if $\overline{\delta}(\mathbf{G}, \mathbf{v}) < 1$.  That is, Arsonist wins if they can eventually burn all but a negligible fraction of the vertices and Builder wins if they can eventually guarantee that for all time steps going forward, a constant fraction of the vertices are unburned. If $\overline{\delta}(\mathbf{G}, \mathbf{v}) = 1$ but $\underline{\delta}(\mathbf{G}, \mathbf{v}) < 1$, which is to say if the burning density does not exist but the proportion of burning vertices approaches $1$ infinitely often, we say that neither player wins and the game ends in a draw.  Assume that both Builder and Arsonist play `perfectly': among all possible choices, they always choose the best.

One may equivalently view this game as measuring the worst-case burning density of a graph burning process in which the graphs are known to be connected and the orders of the graphs to be burned are known, but the specific choice of $G_n$ is only revealed on turn $n$.  

The game we have described is an adversarial zero-sum game with perfect information. This means at most one player can win and both players know of all options available to each player at every step in the game. Such games have a long history in the literature, tracing back to von Neumann and Morgenstern~\cite{NM47}. Gale and Stewart~\cite{GS53} introduced the idea of infinite games in which a winner is selected only after countably many turns. Infinite games are used in mathematical logic and have been used in computer science to study computation processes~\cite{yG93}. Other examples of games on graphs that have (or may potentially have) infinitely many rounds include a Maker-Breaker game for infinite connected components of the origin in a subgraph of the infinite grid~\cite{DF-R21}, a Ramsey-game where a win for one of the players involves an infinite graph~\cite{HKNPRS17}, a variation of Cops and Robbers on an infinite graph~\cite{fL16}, and energy-parity games~\cite{CD12}.

The central question of this paper is determining whether, given that Builder adds $f(n)$ vertices at turn $n$, Arsonist or Builder has a winning strategy. Our main result proves there is a threshold at which functions which grow polynomially switch from being Arsonist-win to Builder-win. Our results concern the asymptotic behaviour of the function controlling the number of vertices Builder receives on turn $n$. We direct the reader to~\cite{CLRS09} for definitions of asymptotic notation.

\begin{theorem}\label{thm:simple main}
Let $\alpha > 0$  and $f: \mathbb{Z}^+ \to \mathbb{Z}_{\ge 0}$ satisfy $f(n) = \theta(n^\alpha)$. If $\alpha < 1$, then Arsonist has a winning strategy in the adversarial burning game in which Builder adds $f(n)$ vertices on turn $n$. If $\alpha \ge 1$, then Builder has a winning strategy.
\end{theorem}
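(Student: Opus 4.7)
The plan splits on whether $\alpha<1$ or $\alpha\geq 1$; for $\alpha<1$ I would have Arsonist exploit the bound \eqref{best known upper bound of b(G)} via a phased strategy. Inductively set $t_{i+1}=t_i+k_i$ where $k_i:=\lceil\sqrt{2|V(G_{t_i})|}\,\rceil$. At the start of phase $i$, Arsonist commits to an optimal $k_i$-step burning sequence for $G_{t_i}$ and plays it over the next $k_i$ turns, so that every vertex of $V(G_{t_i})$ is on fire by time $t_{i+1}$. Since $|V(G_n)|=\Theta(n^{\alpha+1})$, we get $k_i=\Theta(t_i^{(\alpha+1)/2})=o(t_i)$ when $\alpha<1$, so $t_{i+1}/t_i\to 1$ and hence $|V(G_{t_{i-1}})|/|V(G_{t_{i+1}})|\to 1$. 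For any $t\in[t_i,t_{i+1})$, all of $V(G_{t_{i-1}})$ is already burning from the previous phase, so $|B_t|/|V_t|\geq |V(G_{t_{i-1}})|/|V(G_{t_{i+1}})|\to 1$, and Arsonist wins.

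For $\alpha\geq 1$, Builder will play a path: at turn $n$, attach the $f(n)$ new vertices as a path hanging off the current rightmost vertex. On a path, a source placed at turn $j$ burns at most $2(n-j)+1$ vertices by turn $n$, so the total burned count is at most $\sum_{j=1}^n (2(n-j)+1)=n^2$. When $\alpha>1$, $|V(G_n)|=\Theta(n^{\alpha+1})=\omega(n^2)$, so the burning density tends to $0$ and Builder wins.

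The borderline case $\alpha=1$ needs a finer accounting and is where I expect the main difficulty. Since a source placed at turn $j$ must sit in positions $1,\ldots,N_j$ of the path (with $N_j:=|V(G_j)|$), it can reach by time $n$ only positions at most $R_j:=N_j+(n-j)$, and $R_j$ is strictly increasing in $j$. I would partition the path into cells $C_k=(R_{n-k},R_{n-k+1}]$. Positions in $C_1\cup\cdots\cup C_K$ can be burned only by the $K$ most recent sources $n-K+1,\ldots,n$, whose total ball-sizes sum to $\sum_{i=0}^{K-1}(2i+1)=K^2$; meanwhile $\sum_{k=1}^K|C_k|=\Theta(Kn-K^2/2)$ because $|C_k|=f(n-k+1)-1=\Theta(n-k)$. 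Choosing $K=\lfloor n/3\rfloor$ then produces an uncovered deficit of order $n^2$, a positive fraction of $|V(G_n)|=\Theta(n^2)$, so $\overline{\delta}$ is bounded away from $1$. The main obstacle will be rigorously justifying this supply-versus-demand inequality, in particular that sources' balls near cell boundaries cannot be arranged to overcome the deficit.
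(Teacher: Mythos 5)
Your plan is essentially the paper's own: for $\alpha<1$, a phased Arsonist strategy that repeatedly burns all of the current graph within $\lceil\sqrt{2|V(G_{t_i})|}\,\rceil$ turns using the bound~\eqref{best known upper bound of b(G)}, and for $\alpha\ge 1$, Builder playing a growing path with a ``prefix versus recent sources'' count; your cell decomposition $C_k=(R_{n-k},R_{n-k+1}]$ is a repackaging of the paper's estimate $|B_N|\le\sum_{k\le k_0}f(k)+(N-k_0)+\sum_{\ell\le N-k_0}(2\ell-1)$ with $k_0=n-K$. The worry you flag at the end is not actually an obstacle: a source started on turn $j$ occupies at most $2(n-j)+1$ positions of the path wherever it sits, and no vertex in a position greater than $R_{n-K}$ can become burning except through a source placed after turn $n-K$, so ``at most $K^2$ burned positions beyond $R_{n-K}$'' is already rigorous.

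The genuine gap is your choice $K=\lfloor n/3\rfloor$ in the borderline case $\alpha=1$. There $f(n)=\theta(n)$ only guarantees $an\le f(n)\le bn$ for some constants $0<a\le b$, and your deficit is
\[
\sum_{k=1}^{K}|C_k|-K^2\ \ge\ a\Bigl(Kn-\tfrac{K^2}{2}\Bigr)-K^2-O(K),
\]
which for $K=n/3$ equals $\bigl(\tfrac{5a}{18}-\tfrac{1}{9}\bigr)n^2-O(n)$ and is negative whenever $a<2/5$ (e.g.\ $f(n)=\lceil n/10\rceil$, which the theorem must cover). So the stated choice does not produce an uncovered set of order $n^2$ in general; $K$ must be a small enough multiple of $n$ depending on $a$, for instance $K=\lfloor\tfrac{a}{a+2}n\rfloor$ (equivalently the paper's $k_0=\lfloor\tfrac{2}{2+a}N\rfloor$ in Proposition~\ref{Builder wins for alpha=1}), which gives at least $\tfrac{a^2}{2(a+2)}n^2-O(n)$ unburned vertices and hence $\overline{\delta}\le 1-c$ for some $c>0$. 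A smaller point in the $\alpha<1$ part: $|V(G_n)|=\Theta(n^{\alpha+1})$ together with $t_{i+1}/t_{i-1}\to1$ does not by itself yield $|V(G_{t_{i-1}})|/|V(G_{t_{i+1}})|\to1$, since the implied constants in $\Theta$ may differ; instead bound the increment $\sum_{j=t_{i-1}+1}^{t_{i+1}}f(j)=O\bigl((t_{i+1}-t_{i-1})\,t_{i+1}^{\alpha}\bigr)=o\bigl(t_i^{\alpha+1}\bigr)=o\bigl(|V(G_{t_{i-1}})|\bigr)$, which is exactly what the integral estimate in Proposition~\ref{Arsonist wins for alpha<1} accomplishes. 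With these repairs your argument goes through and coincides with the paper's proof of Theorem~\ref{thm:simple main}.
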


Indeed, Theorem~\ref{thm:simple main} is a direct consequence of a broader classification theorem which considers functions that may fluctuate between growth rates.

\begin{theorem}\label{thm:main}
Let $f: \mathbb{Z}^+ \to \mathbb{Z}_{\ge 0}$. If, in the adversarial graph burning game, Builder adds $f(n)$ vertices on turn $n$, then:
\begin{itemize}
\item Arsonist has a winning strategy if there exists $\alpha < 1$ such that $f(n) = O(n^\alpha)$ and $\omega(n^{2\alpha-1})$, and
\item Builder has a winning strategy if $f(n) = \Omega(n)$.
\end{itemize}
\end{theorem}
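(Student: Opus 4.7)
For the first bullet, I would have Arsonist execute consecutive \emph{burning phases}. Set $t_1 = 1$ and inductively $t_{i+1} = t_i + k_i$ with $k_i := \lceil\sqrt{2|V_{t_i}|}\rceil$; by (\ref{best known upper bound of b(G)}), $k_i \geq b(G_{t_i})$, so there exist witnesses $u_{i,1}, \ldots, u_{i,k_i} \in V(G_{t_i})$ for a burning schedule of $G_{t_i}$. Arsonist places $v_{t_i + j - 1} := u_{i,j}$ for $j = 1, \ldots, k_i$; since graphs are nested, $u_{i,j} \in V_{t_i} \subseteq V_{t_i+j-1}$, so this placement is legal. Because distances only decrease under vertex/edge addition, the usual burning argument guarantees that by turn $t_{i+1}-1$ every vertex of $G_{t_i}$ has been burned, so $|B_t| \geq |V_{t_i}|$ for all $t \geq t_{i+1}-1$.

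\textbf{Analysis.} For $t \in [t_{i+1}, t_{i+2})$ the unburned fraction is at most $(|V_{t_{i+2}}| - |V_{t_i}|)/|V_{t_i}|$. Using $f(n) = O(n^\alpha)$, $|V_n| = O(n^{\alpha+1})$, so $k_i = O(t_i^{(\alpha+1)/2}) = o(t_i)$ for $\alpha<1$, and iterating, $t_{i+2} = (1+o(1))t_i$. Hence $|V_{t_{i+2}}| - |V_{t_i}| \leq (k_i + k_{i+1})\cdot\max_{t_i < j \leq t_{i+2}} f(j) = O(\sqrt{|V_{t_i}|}\cdot t_i^\alpha)$. Writing $|V_{t_i}| = h(t_i)\, t_i^{2\alpha}$, the hypothesis $f(n) = \omega(n^{2\alpha-1})$ gives $h(t_i) \to \infty$, so dividing yields an unburned fraction of $O(t_i^\alpha/\sqrt{|V_{t_i}|}) = O(1/\sqrt{h(t_i)}) \to 0$, and thus $\delta(\mathbf{G},\mathbf{v}) = 1$.

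\textbf{Builder's strategy.} For the second bullet, my plan is for Builder to maintain a growing path: at turn $n$, attach $f(n)$ new vertices in a path to the current rightmost vertex, producing $G_N$ as a path with labels $0,1,\ldots,L_N$, where $L_N := \sum_{i \leq N} f(i)$. A source $v_n$ placed at position $p_n \in [0, L_n]$ covers by turn $N$ at most the interval $[p_n - (N-n), p_n + (N-n)]$, of length $2(N-n)+1$, and can reach position $x$ only if $x \leq L_n + (N-n)$. Summing, total Arsonist coverage is at most $\sum_{n=1}^N (2(N-n)+1) = N^2$. When $f(n) \geq cn$ with $c > 2$, eventually $L_N \geq cN(N+1)/2 > N^2$, leaving an unburned fraction at least $1 - N^2/L_N \to 1 - 2/c > 0$. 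For the $f(n) = \Theta(n)$ case (where $L_N$ may be $\leq N^2$), the refinement is to examine the right tail $[L_N - y, L_N]$: using $f(n) \geq c_1 n$, the reach inequality $L_n + (N-n) \geq L_N - y$ forces $N - n = O(y/N)$, so the qualifying sources contribute total ball size only $O(y^2/N^2)$. Optimizing over $y = \Theta(N^2)$ within the allowed range $[0, L_N]$ makes this much less than $y$, leaving a positive constant fraction of $|V_N|$ unburned.

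The main obstacle, I expect, will be making the right-tail computation fully uniform in the $\Theta(n)$ case for small constants $c_1$, where the optimal choice of $y$ and the comparison with $L_N$ split into cases depending on the relation between $c_1$ and $c_2$. One must also verify that the sum of ball sizes really upper-bounds the maximum coverage of those balls within the tail, rather than merely counting total mass. The Arsonist side is cleaner, the main care being to coordinate consecutive phases so that the factor $h(t_i)$ propagates correctly from phase to phase.
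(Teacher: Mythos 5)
Your proposal is correct and follows essentially the same route as the paper: Arsonist burns in phases of length $\lceil\sqrt{2|V_{t_i}|}\rceil$ using the bound $b(G)\leq\sqrt{2n}$ and shows the vertices added during two consecutive phases are a vanishing fraction (with $f(n)=\omega(n^{2\alpha-1})$ entering exactly as your $h(t_i)\to\infty$, which is the paper's $|V_{N_k}|=\omega(N_k^{2\alpha})$), while Builder grows a path at one end and uses the $\sum(2(N-n)+1)=N^2$ coverage bound. The only cosmetic difference is in the $\Theta(n)$ case, where you count unburned vertices in a right tail of length $\Theta(N^2)$ that recent sources cannot reach, whereas the paper splits time at $k_0=\lfloor\frac{2}{2+\alpha}N\rfloor$ and bounds the burned set globally; both yield the same constant-fraction conclusion.
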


In~\cite{BJR16}, it was shown that determining extremal values of burning numbers for connected graphs can always be reduced to the restricted problem on trees.  Similarly, we show that, in terms of understanding which player has a winning strategy, it suffices to assume that Builder always builds trees.

Given a function $f(n)$, define a \emph{legal construction} for Builder to be a sequence of graphs $(G_1, G_2, \dots)$ such that for each $n\in \mathbb{Z}^+$, $G_n$ is a connected graph with $\sum_{k=1}^n f(k)$ vertices, $G_n \subseteq G_{n+1}$, and the subgraph of $G_{n+1}$ induced by $V(G_n)$ is $G_n$.

\begin{theorem} \label{thm:why_trees}
Let $f(n)$ be given. For every legal construction $\mathbf{G}=(G_1, G_2, \dots)$ for Builder and for all choices of fire sources, $\mathbf{v}$, every legal construction $\mathbf{T}=(T_1, T_2, \dots)$ such that $T_n$ is a spanning tree of $G_n$, for all $n\in \mathbb{Z}^+$, satisfies $\underline{\delta}(\mathbf{T}, \mathbf{v}) \leq \underline{\delta}(\mathbf{G}, \mathbf{v})$ and $\overline{\delta}(\mathbf{T}, \mathbf{v}) \leq \overline{\delta}(\mathbf{G}, \mathbf{v})$.
\end{theorem}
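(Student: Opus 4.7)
The plan is to prove, by induction on $n$, that the burning set after step $n$ in the tree game is contained in the burning set after step $n$ in the graph game, when both are played with the same fire source sequence $\mathbf{v}$. Since $V(T_n) = V(G_n)$ for all $n$, dividing by the common vertex count $|V_n|$ preserves the inclusion as an inequality of ratios, and taking $\liminf$ and $\limsup$ then yields $\underline{\delta}(\mathbf{T},\mathbf{v}) \leq \underline{\delta}(\mathbf{G},\mathbf{v})$ and $\overline{\delta}(\mathbf{T},\mathbf{v}) \leq \overline{\delta}(\mathbf{G},\mathbf{v})$.

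For the induction itself, write $B_n^G$ and $B_n^T$ for the burning sets after step $n$ in $\mathbf{G}$ and $\mathbf{T}$ respectively. In the base case $n=1$, no vertices are burning when fire-spread is attempted, so $B_1^T = \{v_1\} = B_1^G$. For the inductive step, assume $B_{n-1}^T \subseteq B_{n-1}^G$. Builder's move at turn $n$ adds the same set of new vertices in both games, and $E(T_n) \subseteq E(G_n)$ by hypothesis. The fire-spread phase produces intermediate burning sets $B_{n-1}^T \cup N_{T_n}(B_{n-1}^T)$ and $B_{n-1}^G \cup N_{G_n}(B_{n-1}^G)$. Monotonicity of the neighbourhood operator in both arguments gives
\[
N_{T_n}(B_{n-1}^T) \subseteq N_{G_n}(B_{n-1}^T) \subseteq N_{G_n}(B_{n-1}^G),
\]
and then adjoining the common fire source $v_n$ to each side preserves the containment, so $B_n^T \subseteq B_n^G$.

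I do not anticipate a significant obstacle here; the statement is essentially the intuitive fact that deleting edges can only slow the spread of fire, and the induction is the natural way to make this precise. The only points that need mild care are checking that a legal construction $\mathbf{T}$ of spanning trees really does keep the vertex set synchronized with $\mathbf{G}$ (which is built into the definition of a legal construction, since $|V(T_n)| = \sum_{k=1}^n f(k) = |V(G_n)|$) and that each Arsonist choice $v_n$ is an available vertex in both games (which again follows from $V(T_n) = V(G_n)$).
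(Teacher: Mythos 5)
Your proposal is correct and follows essentially the same route as the paper's proof: the paper likewise plays the same fire-source sequence on the spanning trees and observes that, since $E(T_n)\subseteq E(G_n)$ while $V(T_n)=V(G_n)$, the burning set in $T_n$ is contained in that of $G_n$, giving the density inequalities. Your explicit induction on $n$ merely fills in the monotonicity step the paper leaves implicit.
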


In particular, if Arsonist has a strategy that succeeds given any sequence of trees Builder selects, they have a winning strategy for all legal constructions. Similarly, if Builder has a winning strategy, they also have a winning strategy using a legal construction consisting of a sequence of trees.

We prove Theorem~\ref{thm:why_trees} in Section~\ref{sec:why_trees} and Theorem~\ref{thm:main} in Section~\ref{sec:poly} before closing with a few unresolved questions regarding the adversarial burning game.

\subsection{Acknowledgements}
The authors offer thanks to Ruiyang Chen who gave useful feedback on some preliminary discussions of the problems considered in this paper as well as the anonymous referee who offered several useful refinements to the results and examples in this paper.

\section{Restricting Builder to trees} \label{sec:why_trees}

In this section we prove Theorem~\ref{thm:why_trees}.

\begin{proof}
For a legal construction $\mathbf{G}=(G_1, G_2, \dots)$, let $\mathbf{T}=(T_1, T_2, \dots)$ be any sequence of graphs such that $T_n \subseteq T_{n+1}$, the subgraph of $T_{n+1}$ induced by $V(T_n)$ is $T_n$, and $T_n$ is a spanning tree of $G_n$ for all $n\in \mathbb{Z}^+$. Let $\mathbf{v} = (v_1, v_2, \dots)$ be any sequence of fire sources for Arsonist in $\mathbf{G}$. Consider the game in which Arsonist burns the same sequence of vertices $\mathbf{v}$ when Builder uses the legal construction $\mathbf{T}$. 

Since $T_n$ contains a subset of the edges of $G_n$ for every $n\in \mathbb{Z}^+$, the number of burning vertices in $T_n$ is at most the number of burning vertices in $G_n$. Furthermore, $T_n$ and $G_n$ have the same number of vertices for all $n\in \mathbb{Z}^+$. Thus $\underline{\delta}(\mathbf{T}, \mathbf{v}) \leq \underline{\delta}(\mathbf{G}, \mathbf{v})$ and $\overline{\delta}(\mathbf{T}, \mathbf{v}) \leq \overline{\delta}(\mathbf{G}, \mathbf{v})$.

\end{proof}

Theorem~\ref{thm:why_trees} guarantees Builder has an optimal strategy in which the sequence of graphs they construct is a growing sequence of trees. As a consequence, we may restrict our consideration of Builder's strategies to those in which Builder only builds trees, though doing so will not always be necessary. 

\section{Polynomial thresholds} \label{sec:poly}

In this section we prove Theorem~\ref{thm:main} by describing winning strategies in each scenario. We start by proving that Arsonist has a winning strategy when the number of vertices Builder is given is bounded by two sublinear polynomials.

\begin{proposition}\label{Arsonist wins for alpha<1}
Let $\alpha < 1$ and let $f: \mathbb{Z}^+ \to \mathbb{Z}_{\ge 0}$. If $f(n) = O(n^{\alpha})$ and $f(n) = \omega(n^{2\alpha-1})$, then Arsonist has a winning strategy in the adversarial burning game when Builder receives $f(n)$ new vertices at step $n$.
\end{proposition}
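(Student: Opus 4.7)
The plan is for Arsonist to play an ``iterative burnout'' strategy that periodically commits to burning the current snapshot of the graph. Setting $n_1 := 1$ and $n_{i+1} := n_i + b(G_{n_i})$, during rounds $n_i + 1, \dots, n_{i+1}$ Arsonist executes the fire-source sequence of an optimal burning strategy for $G_{n_i}$, substituting any scheduled source that has already caught fire with an arbitrary unburned vertex. Because $G_{n_i}$ is a subgraph of every later $G_{n_i + j}$, fire spread within $G_{n_i}$ in the actual game dominates its spread in $G_{n_i}$ alone, so at time $n_{i+1}$ every vertex of $V(G_{n_i})$ is burning.

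Writing $V_n := \sum_{k=1}^n f(k)$, I will extract two quantitative consequences of the growth hypotheses: the upper bound $f(n) = O(n^\alpha)$ gives $V_n = O(n^{\alpha+1})$, while $f(n) = \omega(n^{2\alpha-1})$ gives $V_n = \omega(n^{2\alpha})$ (by restricting the sum to $k \in [n/2, n]$ and using that $f(k)/k^{2\alpha-1} \to \infty$). The second of these is equivalent to $n^\alpha = o(\sqrt{V_n})$, which will be the key inequality. Applying the bound $b(G) \le \sqrt{2|V(G)|}$ from~\eqref{best known upper bound of b(G)}, the gap between consecutive checkpoints is $n_{i+1} - n_i \le \sqrt{2 V_{n_i}} = O(n_i^{(\alpha+1)/2})$, which is $o(n_i)$ since $\alpha < 1$; in particular $n_{i-1}, n_{i+1} = (1 + o(1)) n_i$. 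The number of vertices Builder contributes during any interval $[n_{i-1}, n_{i+1}]$ is then at most $(n_{i+1} - n_{i-1}) \cdot O(n_i^\alpha) = O(\sqrt{V_{n_i}} \cdot n_i^\alpha) = o(V_{n_i})$, using $n^\alpha = o(\sqrt{V_n})$.

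To conclude, for any sufficiently large time $m$ pick $i$ with $m \in [n_i, n_{i+1}]$: the burning set at time $m$ contains $V(G_{n_{i-1}})$ and the total vertex count is at most $V_{n_{i+1}}$, so the burning fraction is at least $V_{n_{i-1}}/V_{n_{i+1}} = 1 - o(1)$. Thus $|B_m|/|V_m| \to 1$ and Arsonist wins. I expect the step requiring the most care to be the bookkeeping across two consecutive checkpoints: to lower-bound the burning fraction at an in-between time one must look back one checkpoint for guaranteed-burnt vertices and forward one checkpoint to cap the current vertex count, and the hypothesis $f(n) = \omega(n^{2\alpha-1})$ is precisely what keeps the intervening freshly-added vertices negligible.
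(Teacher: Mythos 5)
Your proposal is correct and follows essentially the same approach as the paper's proof: checkpoints at times $n_{i+1} = n_i + b(G_{n_i})$, the bound $b(G) \le \sqrt{2|V(G)|}$ from~\eqref{best known upper bound of b(G)}, and the hypothesis $f(n) = \omega(n^{2\alpha-1})$ (equivalently $V_n = \omega(n^{2\alpha})$, i.e.\ $n^\alpha = o(\sqrt{V_n})$) to make the vertices added between consecutive checkpoints negligible, with the same look-back/look-ahead bookkeeping $|B_m|/|V_m| \ge V_{n_{i-1}}/V_{n_{i+1}}$. The only differences are cosmetic: you bound the newly added vertices by interval length times the maximum of $f$ rather than the paper's integral and binomial estimates, and your uniform derivation of $V_n = \omega(n^{2\alpha})$ avoids the paper's case split at $\alpha = 1/2$.
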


\begin{proof}
  Let $c > 0$ and $N_1\in \mathbb{Z}^+$ be such that $f(n) \leq c n^\alpha$ for all $n \ge N_1$. At a given step $i$, use $G_i$ to denote the graph that Builder has constructed at step $i$ (based on previous steps and the choices of the Arsonist) and set $V_i = V(G_i)$.

We allow Arsonist to choose vertices to burn arbitrarily in the first $N_1$ steps of the game. Recursively define a sequence of integers $(N_k)_{k \geq 1}$ as follows.  For every $k\geq 1$ define  $A_k = b(G_{N_k})$, the burning number of $G_{N_k}$, and $N_{k+1} = N_k + A_k$.  Arsonist's strategy will be to burn the graph $G_{N_k}$ in $A_k$ steps (which we bound by Equation~\eqref{best known upper bound of b(G)}) between time $N_k$ and time $N_{k+1}$, ignoring any vertices that had been burning before time $N_k$. Thus we have $|B_{N_k}| \geq |V_{N_{k-1}}|$ for all $k\geq 2$ as the number of vertices burning at time $N_k$ is at least the number of vertices in $G_{N_{k-1}}$.

We first estimate the number of new vertices added between time $N_{k-1}$ and $N_k$: $\sum_{n = N_{k-1}+1}^{N_k} f(n)$.  For this, we use the inequality
\begin{equation}\label{binomial}
(1+x)^{1+\alpha} \leq 1+2(1+\alpha)x,
\end{equation}
which holds for $0\leq x<1$ when $-1 \le \alpha < 1$. For $k\geq 2$,
    \begin{align*}
        \sum_{n=N_{k-1} + 1}^{N_k} f(n) &\leq c\sum_{n=N_{k-1} + 1}^{N_k} n^\alpha\\
        &\leq c \int_{N_{k-1}}^{N_k+1} x^\alpha dx \\
        &= \frac{c}{\alpha + 1}\left((N_k+1)^{\alpha + 1} - (N_{k-1})^{\alpha + 1}\right) \\
        &= \frac{c}{\alpha + 1}\left( (N_{k-1} + A_{k-1}+1)^{\alpha + 1} - (N_{k-1})^{\alpha + 1}\right) \\
        &= \frac{c}{\alpha + 1} (N_{k-1})^{\alpha + 1}\left( \left(1 + \frac{A_{k-1}}{N_{k-1}} + \frac{1}{N_{k-1}}\right)^{\alpha + 1} - 1\right) \\
        &\leq \frac{c}{\alpha + 1} (N_{k-1})^{\alpha + 1}\left( \left(1 + \frac{2A_{k-1}}{N_{k-1}}\right)^{\alpha + 1} - 1\right) \\
        &\leq \frac{c}{\alpha + 1} (N_{k-1})^{\alpha + 1}\left( 1 + \frac{4(\alpha + 1)A_{k-1}}{N_{k-1}} - 1\right) & \text{(by Equation~\eqref{binomial})} \\
        &= 4c (N_{k-1})^{\alpha} A_{k-1}\\
        &\le 4c (N_{k-1})^{\alpha} \cdot \sqrt{2|V_{N_{k-1}}|}\,. & \text{(by Equation~\eqref{best known upper bound of b(G)})}\\
    \end{align*}

To estimate the total number of vertices, we consider two cases depending on whether $\alpha \geq 1/2$ or $\alpha < 1/2$.  First, assume that $\alpha \geq 1/2$ so that $2\alpha -1 \geq 0$.

We first claim that
\begin{equation}\label{eq:vg_lb}
    |V_{N_k}| = \omega((N_k)^{2\alpha}).
\end{equation}
Fix $M > 0$ and choose $N$ large enough such that $f(n) \ge 4\alpha M\cdot n^{2\alpha-1}$ for all $n \ge N$. Then for $k$ large enough that $N_k \ge N$,
\begin{align*} 
|V_{N_k}| &= \sum_{n=1}^{N_k}f(n)\\
    &\geq \sum_{n=N+1}^{N_k}4\alpha M n^{2\alpha-1}\\
    &\ge 4\alpha M \int_N^{N_k}x^{2\alpha-1} dx\\
    &\ge 2M \left(\left(N_k\right)^{2\alpha}-N^{2\alpha})\right)\\
    &\ge M\left(N_k\right)^{2\alpha}
\end{align*}
as $N_k \ge 2^{1/(2\alpha)}N$. As $M$ was arbitrary, we have shown $|V_{N_k}| = \omega((N_k)^{2\alpha})$.

To estimate the fraction of burning vertices in the graph $G_{N_k}$, note that
\[ |V_{N_k}| = |V_{N_{k-1}}| + \sum_{n=N_{k-1} + 1}^{N_k} f(n).\]

Therefore, using Equation~\eqref{eq:vg_lb},
\[ \frac{\sum_{n=N_{k-1} + 1}^{N_k} f(n)}{|V_{N_{k-1}}|} \le \frac{4\sqrt{2}c(N_{k-1})^\alpha}{\sqrt{|V_{N_{k-1}}|}} = 4\sqrt{2}c \frac{(N_{k-1})^\alpha}{\omega((N_{k-1})^\alpha)} = o(1).\]

Returning to the case where $\alpha < 1/2$, note that the condition on the lower bound for the function $f$ only requires that when $n$ is sufficiently large, $f(n) > 0$. Therefore for sufficiently large $k$ we may use the bound $|V_{N_{k-1}}| \geq N_{k-1}/2$ to show
\[ \frac{\sum_{n=N_{k-1} + 1}^{N_k} f(n)}{|V_{N_{k-1}}|} \le \frac{4\sqrt{2}c(N_{k-1})^\alpha}{\sqrt{|V_{N_{k-1}}|}} \leq 8c \frac{(N_{k-1})^\alpha}{N_{k-1}^{1/2}} = o(1).\]

Thus in either case,
\begin{align*}
\liminf_{k \to \infty} \frac{|V_{N_{k-1}}|}{|V_{N_k}|} &= \liminf_{k \to \infty} \frac{|V_{N_{k-1}}|}{|V_{N_{k-1}}| + \sum_{n=N_{k-1} + 1}^{N_k} f(n)}\\
    &= \liminf_{k\to \infty} \frac{|V_{N_{k-1}}|}{(1+o(1))|V_{N_{k-1}}|} \\
    &= 1
\end{align*}

For any $n \ge N_1$, choose $k$ largest such that $n \ge N_k$. Then $|B_n| \ge |B_{N_k}| \ge |V_{N_{k-1}}|$ and $|V_n| \le |V_{N_{k+1}}|$, so we have
\begin{align*}
\underline{\delta}(\mathbf{G},\mathbf{v}) &= \liminf_{n \to \infty} \frac{|B_n|}{|V_n|}\\
    &\ge \liminf_{k \to \infty} \frac{|V_{N_{k-1}}|}{|V_{N_{k+1}}|}\\
    &= \left(\liminf_{k \to \infty} \frac{|V_{N_{k-1}}|}{|V_{N_{k}}|}\right) \left(\liminf_{k \to \infty} \frac{|V_{N_{k}}|}{|V_{N_{k+1}}|}\right)\\
    &= 1,
\end{align*}
where we note that the limit infimum is bounded above by $1$ and therefore exists, which permits splitting the limit across the product $\frac{|V_{N_{k-1}}|}{|V_{N_k}|} \cdot \frac{|V_{N_k}|}{|V_{N_{k+1}}|}$. So as
\[ 1 \ge \overline{\delta}(\mathbf{G},\mathbf{v}) \ge \underline{\delta}(\mathbf{G},\mathbf{v}) = 1,\]
the burning density, $\delta(\mathbf{G}, \mathbf{v})$, exists and equals $1$, so Arsonist wins.

\end{proof}

Note that the lower bound conditions on the function $f$ in Proposition~\ref{Arsonist wins for alpha<1} are necessary to guarantee a winning strategy for the Arsonist.  To see this, consider the following examples.

\begin{example}\label{ex:alpha<1}
First, for $\alpha < 1/2$, since the lower bound condition on the function $f$ in Proposition~\ref{Arsonist wins for alpha<1} reduces to simply $f(n) > 0$ for large enough $n$, we consider a function $f$ that alternates between $0$ and $\lfloor n^{\alpha} \rfloor$.  Throughout, we consider a game in which Builder builds a path, always extending only one end.  Given any $N_0$ and $|V_{N_0}|$, let $f(n) = 0$ for $N_0 < n \leq N_1$, with $N_1$ large enough so that $|V_{N_1}| < N_1^{\alpha}$.  Then, for $N_2 = N_1 + \lfloor N_1^{\alpha/2} \rfloor$, define $f(n) = \lfloor n^{\alpha} \rfloor$ when $N_1 < n \leq N_2$.  The total number of vertices at time $N_2$ is
\[
|V_{N_2}| \geq |V_{N_1}| + N_1^{\alpha} \cdot \lfloor N_1^{\alpha/2} \rfloor \geq |V_{N_1}| + N_1^{\alpha} \cdot (N_1^{\alpha/2} - 1) \geq N_1^{3\alpha/2} - N_1^{\alpha}.
\]

To bound from above the number of burning vertices at time $N_2$, we assume that all vertices in $V_{N_1}$ are burning. As Builder constructs a path, the vertices Arsonist burns from time $N_1+1$ to $N_2$ form (possibly overlapping) subpaths of burned vertices containing $1, 3, \ldots, 2\ell-1$ vertices, where
\[ \ell = N_2-(N_1+1)+1 = \lfloor N_1^{\alpha/2} \rfloor,\]
with at most $N_1^{\alpha/2}$ additional vertices burning due to spread from $V_{N_1}$. Together, this gives
\[
|B_{N_2}| \leq |V_{N_1}| + N_1^{\alpha/2} + \sum_{\ell = 1}^{\lfloor N_1^{\alpha/2} \rfloor} (2\ell-1) \leq N_1^{\alpha}+ N_1^{\alpha/2} + (N_1^{\alpha/2})^2 \leq 3 N_1^{\alpha}.
\]
Thus, 
\[
\frac{|B_{N_2}|}{|V_{N_2}|} \leq \frac{3N_1^\alpha}{N_1^{3\alpha/2} - N_1^{\alpha}}.
\]
We can repeat these phases of growth, setting $f(n) = 0$ from $N_{2i}$ to $N_{2i+1}$, chosen so that $|V_{N_{2i+1}}| < N_{2i+1}^\alpha$, and $f(n) = \lfloor n^\alpha \rfloor$ from $N_{2i+1}$ to $N_{2i+2} = N_{2i+1} + \lfloor N_{2i+1}^{\alpha/2} \rfloor$, to get
\begin{equation}\label{eq:alpha<1/2 ub bd}
\frac{|B_{N_{2i+2}}|}{|V_{N_{2i+2}}|} \leq \frac{3N_{2i+1}^\alpha}{N_{2i+1}^{3\alpha/2} - N_{2i+1}^{\alpha}}.
\end{equation}
For $N_{2i+1}$ sufficiently large, the upper bound in equation~\eqref{eq:alpha<1/2 ub bd} is arbitrarily small.  Repeating these phases of growth shows that for any choice of fire sources, $\underline{\delta}(\mathbf{G}, \mathbf{v}) = 0$, so we do not have $\delta(\mathbf{G}, \mathbf{v}) = 1$ and Arsonist does not win. 
\end{example}

\begin{example}\label{ex:1/2<alpha<1}
    Consider now an example where, for $1/2 \leq \alpha < 1$, the function $f$ fluctuates between $f(n) = \lfloor n^{\alpha} \rfloor$ and $f(n) = \lfloor n^{2\alpha-1} \rfloor$.  Again, assume that Builder builds a path, always extending only one end.

    Fix $0 < \varepsilon < 1/8$.  Given any $N_0$, let $M_0$ be the number of vertices added by time $N_0$.  Assuming that $f(n) = \lfloor n^{2\alpha-1} \rfloor$ for $N_0 < n \leq N$, then as $N$ tends to infinity, the number of vertices at time $N$ is
    \begin{align*}
    |V_N| &= M_0 + \sum_{n = N_0+1}^N \lfloor n^{2\alpha-1} \rfloor\\
        &= M_0 + (1+o(1))\int_{N_0}^N x^{2\alpha-1}\ dx\\
        &=M_0 + \frac{(1+o(1))}{2\alpha}\left(N^{2\alpha} - N_0^{2\alpha} \right)\\
        &=\frac{(1+o(1))}{2\alpha} N^{2\alpha},
    \end{align*}
    for any fixed value of $N_0$.
    Thus, given $N_0$ and the number of vertices added up to this point, let $N_1$ be large enough so that if $f(n) = \lfloor n^{2\alpha-1} \rfloor$ for $N_0 < n \leq N_1$, then
    \[
    \left\vert |V_{N_1}| - \frac{N_1^{2\alpha}}{2\alpha} \right\vert \leq \varepsilon N_1^{2 \alpha}.
    \]
    
    Set $N_2 = N_1 + \lfloor N_1^{\alpha} \rfloor$ and for $N_1 < n \leq N_2$, let $f(n) = \lfloor n^{\alpha} \rfloor$.  To bound from above the number of burning vertices at time $N_2$, we can assume that all vertices in $V_{N_1}$ and all vertices added up to time $N_1+\lfloor N_1^{\alpha}/2 \rfloor$ are burning.  Accounting for the spread of burning vertices from these and the (possibly overlapping) subpaths surrounding the vertices Arsonist selected after time $N_1+\lfloor N_1^{\alpha}/2 \rfloor$, we see that
    \begin{align}
        |B_{N_2}| &\leq |V_{N_1}| + \sum_{n = N_1 +1}^{N_1 + \lfloor N_1^{\alpha}/2 \rfloor} n^{\alpha} + (N_2 - (N_1+\lfloor N_1^{\alpha}/2 \rfloor)) + \sum_{\ell = 1}^{\lfloor N_1^{\alpha} \rfloor - \lfloor N_1^{\alpha}/2 \rfloor} (2\ell-1) \notag\\
        &\leq \left(\frac{1}{2\alpha} + \varepsilon\right) N_1^{2\alpha} + \int_{N_1+1}^{N_1 + N_1^{\alpha}/2+1} x^{\alpha}\ dx + \frac{N_1^\alpha}{2}+1 + \left(\frac{N_1^{\alpha}}{2} + 1 \right)^2 \label{eq:burning-ub-ex}
    \end{align}
    To estimate the integral in equation~\eqref{eq:burning-ub-ex}, we extend the inequality in Equation~(\ref{binomial}) by noting
    \begin{equation}\label{extended binomial}
    (1+x)^{\alpha+1} \leq 1 + (\alpha +1) x + (\alpha+1)\alpha x^2,
    \end{equation}
    which holds for $0 \leq x< 1$ for any $0 \leq \alpha < 1$. Therefore,
    \begin{align}
    \int_{N_1+1}^{N_1 + N_1^\alpha/2 + 1} x^{\alpha}\ dx &=\frac{1}{\alpha+1}\left((N_1 + N_1^{\alpha}/2 + 1)^{\alpha+1}  - (N_1+1)^{\alpha+1}\right) \notag\\
        &= \frac{(N_1+1)^{\alpha+1}}{\alpha+1}\left(\left(1 + \frac{N_1^{\alpha}}{2(N_1+1)}\right)^{\alpha+1} - 1 \right) \notag \\
        &\leq \frac{(N_1+1)^{\alpha+1}}{\alpha+1} \left(1 + \frac{(\alpha + 1)N_1^{\alpha}}{2(N_1+1)} + \frac{(\alpha+1)\alpha N_1^{2\alpha}}{4(N_1+1)^2} -1\right) \notag & \text{(by Equation~(\ref{extended binomial}))}\\
        &= \frac{N_1^{\alpha} (N_1+1)^\alpha}{2} + \frac{\alpha N_1^{2\alpha}}{4(N_1+1)^{1-\alpha}} \notag\\
        &\leq \frac{N_1^{2\alpha}}{2} (1+1/N_1)^\alpha + \frac{\alpha N_1^{3 \alpha -1}}{4} \notag\\
        &\leq \frac{N_1^{2 \alpha}}{2} \left(1 + \frac{2\alpha}{N_1} \right) + \frac{\alpha N_1^{3 \alpha -1}}{4} \notag & \text{(by Equation~(\ref{binomial}))}\\        
        &\leq \frac{N_1^{2\alpha}}{2} + 2 \alpha N_1^{3\alpha-1} \label{eq:integral-est}
    \end{align}
    where we note we applied Equation~(\ref{binomial}) with $\alpha$ instead of $1+\alpha$, which changes the restriction on $\alpha$ from $-1 \le \alpha < 1$ to $0 \le \alpha < 2$. We assumed $1/2 \le \alpha < 1$ and so this formulation of Equation~(\ref{binomial}) applies.
    
    Since $\left(\frac{N_1^{\alpha}}{2} + 1\right)^2 = \frac{N_1^{2\alpha}}{4} + N_1^{\alpha} + 1 \leq  \frac{N_1^{2\alpha}}{4} + 2N_1^{\alpha}$, combining equations~\eqref{eq:burning-ub-ex} and \eqref{eq:integral-est} gives
    \begin{align*}
    |B_{N_2}| &\leq \left(\frac{3}{4} + \frac{1}{2\alpha} + \varepsilon \right)N_1^{2\alpha} + \frac{N_1^\alpha}{2}+1 + 2\alpha N_1^{3\alpha -1} + 2N_1^{\alpha}\\
        &\leq \left(\frac{3}{4} + \frac{1}{2\alpha} + \varepsilon \right)N_1^{2\alpha} + 5 N_1^{3\alpha -1}.
    \end{align*}
    Meanwhile,
    \begin{align*}
        |V_{N_2}| &\geq |V_{N_1}| + \sum_{n = N_1+1}^{N_1 + \lfloor N_1^\alpha \rfloor} (n^{\alpha} - 1) \\
        &\geq |V_{N_1}| + \lfloor N_1^{\alpha} \rfloor ((N_1+1)^{\alpha} - 1)\\
    &    \geq \left(\frac{1}{2 \alpha } - \varepsilon \right)N_1^{2\alpha} + (N_1^{\alpha} - 1)^2\\
    & = \left(1+\frac{1}{2 \alpha } - \varepsilon \right)N_1^{2\alpha} - 2N_1^{\alpha}+1.
    \end{align*}
    Thus, the fraction of burning vertices at time $N_2$ is bounded by
    \begin{equation}\label{eq:alpha>1/2 ub bd}
\frac{|B_{N_2}|}{|V_{N_2}|} \leq \frac{\left(\frac{3}{4} + \frac{1}{2\alpha} + \varepsilon \right)N_1^{2\alpha} + 5 N_1^{3\alpha -1}}{\left(1+\frac{1}{2 \alpha } - \varepsilon \right)N_1^{2\alpha} - 2N_1^{\alpha}+1}.
    \end{equation}
Repeating this two phase construction as we did in Example~\ref{ex:alpha<1} gives a function $f$ such that for any sequence of vertices Arsonist chooses to burn, the upper bound in equation~\eqref{eq:alpha>1/2 ub bd} approaches arbitrarily close to $\frac{3/4 + 1/(2\alpha) + \varepsilon}{1 + 1/(2\alpha) - \varepsilon}$, which is strictly less than $1$. Therefore $\underline{\delta}(\mathbf{G}, \mathbf{v}) < 1$ and so Arsonist does not win.
\end{example}

Next, we consider the adversarial burning game in which the number of vertices Builder is given grows either linearly or superlinearly. 

\begin{proposition}\label{Builder wins for alpha=1}
Let $f: \mathbb{Z}^+ \to \mathbb{Z}_{\ge 0}$. If $f = \Omega(n)$, then Builder has a winning strategy in the adversarial burning game when given $f(n)$ new vertices at step $n$. 
\end{proposition}

\begin{proof}
We consider the strategy in which Builder constructs an increasingly long path by always adding new vertices to one end of the previous path.

Let $n_0, \alpha > 0$ be such that for each $n \ge n_0$, $f(n) \ge \alpha n$. Consider the state of the game on some turn $N > n_0$. First, consider the case where $|V_N| < 2N^2$. Regardless of Arsonist's strategy, some initial portion of Builder's path consists entirely of burning vertices while fire sources that Arsonist has burned more recently form small (potentially overlapping) subpaths. To upper bound $|B_N|$, we choose some turn $k_0$ satisfying $N > k_0 \ge n_0$ and we assume that the parts of the path added in the process up to turn $k_0$ are entirely burning and that fires started after turn $k_0$ do not intersect. This gives us three collections of burning vertices: an initial segment of length $\sum_{k=1}^{k_0} f(k)$, an additional $N-k_0$ vertices extending that initial segment that are potentially burned from that segment, and $N-k_0$ non-intersecting subpaths containing $1,3,\ldots,2(N-k_0)-1$ vertices, giving the estimate
\[ |B_N| \le \sum_{k=1}^{k_0} f(k) + (N-k_0) + \sum_{\ell=1}^{N-k_0} (2\ell-1) = \sum_{k=1}^{k_0} f(k) + (N-k_0)^2 + O(N). \]
The total number of vertices by turn $N$ is
\[ |V_N| = \sum_{k=1}^N f(k) \ge \sum_{k=1}^{k_0} f(k) + \sum_{k=k_0+1}^N \alpha k \ge \sum_{k=1}^{k_0} f(k) + \frac{\alpha}{2}(N^2-k_0^2) \]
and so
\[ \sum_{k=1}^{k_0} f(k) \le |V_N| - \frac{\alpha}{2}(N^2-k_0^2).\]
Thus for any choice of $k_0$ we have a burning density of at most
\begin{align*}
\frac{|B_N|}{|V_N|} &\le \frac{\sum_{k=1}^{k_0} f(k) + (N-k_0)^2 + O(N)}{|V_N|}\\
&\leq \frac{|V_N| - \frac{\alpha}{2}(N^2-k_0^2) + (N-k_0)^2 + O(N)}{|V_N|}\\
&= 1 - \frac{\frac{\alpha}{2}(N^2-k_0^2) - (N-k_0)^2 + O(N)}{|V_N|}.
\end{align*}
We choose
\[ k_0 = \left \lfloor \frac{2}{2+\alpha} N\right \rfloor \]
which gives
\[ \frac{\alpha}{2}(N^2-k_0^2) - (N-k_0)^2 + O(N) = \frac{\alpha^2}{2(\alpha+2)}N^2 + O(N).\]
Thus for sufficiently large $N$ we have
\begin{align*}
    \frac{|B_N|}{|V_N|} &\le 1 - \frac{\frac{\alpha^2}{2(\alpha+2)}N^2 + O(N)}{|V_N|}\\
    &< 1 - \frac{\frac{\alpha^2}{2(\alpha+2)}N^2 + O(N)}{2N^2}\\
    &= 1 - \frac{\alpha^2}{4(\alpha+2)} + o(1)\\
    &< 1 - \frac{\alpha^2}{5(\alpha+2)}.
\end{align*}

Now if instead $|V_N| \ge 2N^2$, we can bound
\[ |B_N| \le \sum_{k=1}^N 2k-1 = N^2\]
and thus we have 
\[ \frac{|B_N|}{|V_N|} \leq \frac{N^2}{2N^2} = \frac{1}{2}.\]
Therefore
    \[\limsup_{N\to \infty}\frac{|B_N|}{|V_N|} \leq \max\left( \frac{1}{2}, 1 - \frac{\alpha^2}{5(\alpha+2)} \right) < 1\]
and so Builder wins.
\end{proof}

In the case where the function $f$ fluctuates between a linear function and a sublinear function, Builder can no longer necessarily win, as shown by the following example.

\begin{example}\label{ex:not always linear}
We construct a function for which there are $0 < \alpha < 1$ and $\beta > 0$ so that either $f(n) > \beta n$ or else $f(n) = \lfloor n^{\alpha} \rfloor$. For any fixed time $N_0$, letting $f(n) > \beta n$ until time $N_1$ gives
\[
|V_{N_1}| = \sum_{n=1}^{N_1} f(n) > \sum_{n=N_0}^{N_1} \beta n = \frac{\beta}{2}N_1^2 - O(N_1),
\]
so choose $N_1$ large enough that
\[
|V_{N_1}| > \frac{\beta}{4}N_1^2.
\]
Set $N_2 = N_1 + \lceil \sqrt{2|V_{N_1}|} \rceil$ so that, using Equation~\eqref{best known upper bound of b(G)}, by time $N_2$ Arsonist can burn all of the vertices in $V_{N_1}$ no matter which sequence of graphs Builder chose and which fire sources Arsonist chose before turn $N_1$.  For $N_1 < n \leq N_2$, let $f(n) = \lfloor n^{\alpha} \rfloor$.  Note that by the assumption on $N_1$, $N_1 < \sqrt{\frac{4}{\beta}|V_{N_1}|}$.  The number of new vertices added during this time is
\begin{align*}
 \sum_{n = N_1 + 1}^{N_2} f(n) &\leq\lceil \sqrt{2|V_{N_1}|} \rceil N_2^{\alpha}\\
 &= \lceil \sqrt{2|V_{N_1}|} \rceil \left(N_1 + \lceil \sqrt{2|V_{N_1}|} \rceil \right)^{\alpha}\\
    &\leq 2\sqrt{|V_{N_1}|} \left(\sqrt{\frac{4}{\beta}|V_{N_1}|} + 2\sqrt{|V_{N_1}|} \right)^{\alpha}\\
    &\leq 4(\beta^{-1/2} + 1)^{\alpha}|V_{N_1}|^{(\alpha+1)/2}.
\end{align*}
Thus, the fraction of burning vertices at time $N_2$ is
\[
\frac{|B_{N_2}|}{|V_{N_2}|} \geq \frac{|V_{N_1}|}{|V_{N_1}| + 4(\beta^{-1/2} + 1)^{\alpha}|V_{N_1}|^{(\alpha+1)/2}}.
\]
We repeat this process, letting $f(n) > \beta n$ between $N_{2i}$ and $N_{2i+1}$, choosing $N_{2i+1}$ so that $|V_{N_{2i+1}}| > \frac{\beta}{4}N_{2i+1}^2$ and, because $(\alpha+1)/2 < 1$,
\[ \lim_{i \to \infty} \frac{|V_{N_{2i+1}}|}{|V_{N_{2i+1}}| + 4(\beta^{-1/2} + 1)^{\alpha}|V_{N_{2i+1}}|^{(\alpha+1)/2}} \to 1, \]
and $f = \lfloor n^\alpha \rfloor$ between $N_{2i+1}$ and $N_{2i+2} = N_{2i+1} + \lceil \sqrt{2|V_{N_{2i+1}}|} \rceil$. Because there exists a sequence of vertices Arsonist can choose to burn such that $\overline{\delta}(\mathbf{G}, \mathbf{v}) = 1$, Builder cannot have a winning strategy.
\end{example}

\section{Further directions} \label{sec:further_directions}

While Theorem~\ref{thm:main} resolves the question of which player wins the adversarial burning game for a large collection of functions, there are clearly many functions not covered by either the results given here or a direct application of the proof techniques. By alternating between exponentially many turns of no new vertices and one turn with exponentially many vertices, one can readily construct examples that fluctuate between different growth rates in which neither player wins, and even where the lower burning density is 0 and the upper burning density is 1.

While a classification of those functions for which either Arsonist or Builder has a winning strategy seems to be out of reach, it is natural to seek to expand the class of functions for which the winner is known. One approach would be to determine whether the problem exhibits monotonicity under certain conditions. It seems natural that if Builder has a winning strategy given $f(n)$ vertices, any additional vertices could only help.

\begin{question}\label{Monotonicity for Builder}
Let $f,g: \mathbb{Z}^+ \to \mathbb{Z}_{\ge 0}$ be functions such that $f(n)\leq g(n)$ for all $n\in \mathbb{Z}^+$. If Builder wins the adversarial burning game when given $f(n)$ vertices at every step $n$, does it necessarily follow that Builder still wins if they are given $g(n)$ vertices at every step?
\end{question}

Perhaps surprisingly, a reformulation of Question~\ref{Monotonicity for Builder} from Arsonist's perspective does not hold: if Arsonist has a winning strategy when Builder receives $g(n)$ vertices, giving Builder fewer vertices may hurt Arsonist. As seen by Examples~\ref{ex:alpha<1} and \ref{ex:1/2<alpha<1}, for any positive $\alpha < 1$ there are functions $f(n)$ that fluctuate between $\lfloor n^\alpha \rfloor$ and smaller values such that although Arsonist can win when Builder is given $\lfloor n^\alpha \rfloor$ vertices on each turn, they can no longer win when Builder is only given $f(n)$ vertices. If Builder can actually win when given $f(n)$ vertices, we could negatively resolve Question~\ref{Monotonicity for Builder}, but because of the possibility that neither player wins, the question remains open. These examples do, however, motivate a weaker version of Question~\ref{Monotonicity for Builder} from Arsonist's perspective.

\begin{question}\label{Monotonicity for Arsonist}
Let $f,g: \mathbb{Z}^+ \to \mathbb{Z}_{\ge 0}$ be functions such that $f(n)\leq g(n)$ for all $n\in \mathbb{Z}^+$. If Arsonist wins the adversarial burning game when Builder is given $g(n)$ vertices at every step $n$, does it necessarily follow that Builder cannot win if they are given $f(n)$ vertices at every step $n$?
\end{question}

One could also consider a weaker type of monotonicity. Suppose instead of decreasing the number of vertices Builder is given each turn, we only require that at each step the total number of vertices Builder has received is smaller. That is, instead of requiring $f(n)\leq g(n)$ for each $n\in \mathbb{Z}^+$, we have the weaker condition $\sum_{k=1}^n f(k) \leq \sum_{k=1}^n g(k)$.

In each instance we analyzed a specific strategy for Builder, we considered the consequences of Builder constructing a path. The conjecture that paths have extremal burning number makes this a natural choice of strategy. However, it is not uncommon in combinatorial games that an extremal example is not optimal in the context of a game. This suggests the following question.

\begin{question}
Let $f:\mathbb{Z}^+ \to \mathbb{Z}_{\ge 0}$. If Builder has some winning strategy in the adversarial burning game adding $f(n)$ vertices at step $n$, is building a path guaranteed to be a winning strategy for Builder?
\end{question}

If true, one would no longer need to consider which strategy Builder chooses, essentially reducing the question of determining a winner to bounding the burning density of a path that increases in length by $f(n)$ vertices at step $n$.

Finally, one may also consider variations of the adversarial burning game where either Builder or Arsonist is replaced by a random process. Mitsche, Pra\l{}at, and Roshanbin \cite{MPR17} studied the graph burning process when new fire sources were selected uniformly at random, which generalizes naturally to the adversarial game, replacing Arsonist by a uniform random process. Alternatively, one could replace Builder with a \emph{uniform random recursive tree}, which is generated by adding vertices one by one, selecting an existing vertex uniformly at random to be the parent of each new vertex. For more on recursive trees, see \cite{Dr09, MS95}.

\end{document}